\date{}
\newtheorem{theorem}{Theorem}[section]
\newtheorem{lemma}[theorem]{Lemma}
\newtheorem{proposition}[theorem]{Proposition}
\newtheorem{definition}[theorem]{Definition}
\theoremstyle{plain}
\newtheorem{remark}[theorem]{Remark}
\numberwithin{equation}{section}
\newcommand{\iE}{\int_{\BB}}
\newcommand{\da}{\frac{dx_1}{x_1}dx'}
\newcommand{\p}{\partial}
\newcommand{\dt}{\tilde}
\newcommand{\la}{\lambda}
\newcommand{\nE}{\nabla_{\BB}}
\newcommand{\na}{\nabla}
\newcommand{\R}{\mathbb{R}}
\newcommand{\fr}{\frac}
\newcommand{\BB}{\mathbb{B}}
\newcommand{\ml}{\mathcal{L}}
\begin{document}

\title{Higher regularity and finite time blow-up to nonlocal pseudo-parabolic equation with conical degeneration\footnote{This work is supported by National Natural Science Foundation of China (12201567), and Scientific Research Fund of Zhejiang Provincial Education Department (Y202249802), and the Young Doctor Program of Zhejiang Normal University (2021ZS0802)}}

\author{Jingbo Meng\footnote{e-mail: meng4897@zjnu.edu.cn},\quad Guangyu Xu\footnote{Corresponding author, e-mail: guangyuswu@126.com,\ xuguangyu@zjnu.edu.cn}\\
{\small  School of Mathematical Sciences, Zhejiang Normal University \hfill{\ }}\\
\ \ {\small Jinhua, 321004, P.R. China}}

\date{}
\maketitle

\begin{abstract}
  This paper deals with the initial-boundary value problem to a nonlocal semilinear pseudo-parabolic equation with conical degeneration, which has been studied in [Global well-posedness for a nonlocal semilinear pseudo-parabolic equation with conical degeneration, J. Differential Equations, 2020, 269(5): 4566--4597]. We first improve the regularity of the weak solution, and then study finite time blow-up phenomenon for the problem. Our initial condition for blow-up only depends on Nehari functional and the conservative integral, which suggests that the assumption of initial energy functional in the original paper can be removed.
\end{abstract}
\noindent\textbf{Keywords}: Cone Sobolev space; Regularity; Global existence; Finite time blow-up; Pseudo-parabolic equation\\
\textbf{2010 MSC}: 35K61, 35B44, 35K10, 35K55, 35D30, 35R01.
\maketitle

\section{Introduction}

Let $X$ be a $(n-1)$-dimensional closed compact $C^\infty$-smooth manifold, which is regarded as the local model near the conical points. Take $\BB=[0,1)\times X$, then $\p\BB=\{0\}\times X$, near $\p\BB$ we use the coordinates $(x_1, x')=(x_1, x_2, \cdots, x_n)$ for $0\leq x_1<1, x'\in X$. We denote by $\BB_0$ the interior of $\BB$.

In this paper, we consider the following initial-boundary value problem for a nonlocal semilinear pseudo-parabolic equation with conical degeneration
\begin{equation}\label{physic}
    \left\{
    \begin{aligned}
     & u_t-\Delta_\mathbb{B}u_t-\Delta_\mathbb{B}u=|u|^{p-1}u-\frac{1}{|\mathbb{B}|}\iE |u|^{p-1}u\frac{dx_1}{x_1}dx',\;\;\;&& t>0,\ x\in\BB_0,\\
           &\nabla_{\BB}u\cdot\nu=0,\;\;\;\;&& t\geq0,\ x\in\p\BB,\\
            &u(x,0)=u_{0}(x),\;\;\;&&x\in\BB_0,
    \end{aligned}
    \right.
\end{equation}
where $u_0$ is a nontrivial function and belongs to the weighted Sobolev space $\tilde{\mathcal{H}}^{1,\fr{n}{2}}_{2,0}(\BB)$. Constant $p$ satisfies
\begin{equation*}
2<p+1<+\infty,\ \mbox{if}\ n=2, \quad 2<p+1<\fr{2n}{n-2}=:2^*,\ \mbox{if}\ n\geq3,
\end{equation*}
here $2^*$ is the critical cone Sobolev exponent. The Fuchsian type Laplace operator is defined as
 \begin{equation*}
  \Delta_\mathbb{B}=(x_1\p_{x_1})^2+\p^2_{x_2}+\cdots+\p^2_{x_n},
\end{equation*}
which is an elliptic operator with conical degeneration on the boundary $x_1=0$, and the corresponding gradient operator is $\nabla_{\BB}=(x_1\p_{x_1}, \p_{x_2}, \cdots, \p_{x_n}), \nu$ is the unit normal vector pointing toward the exterior of $\BB$. The detail research of manifold with conical singularities and the corresponding cone Sobolev spaces can be found in \cite{Chen2011Existence,Chen2012Cone,Coriasco2007Realizations,Schrohe2001Ellipticity,Schulze1999Boundary}.

{\color{blue}{To date, research on problems for pseudo-parabolic equations is rapidly developing in connection with the needs of modeling and controlling of processes in thermophysics, hydrodynamics and continuum mechanics. Pseudo-parabolic equations are characterized by the occurrence of a time derivative appearing in the highest order term and describe a variety of important physical processes, such as the seepage of homogeneous fluids through a fissured rock \cite{b}, the heat conduction involving two temperatures \cite{cc}. When the unknown $u(x, t)$ is typically the amplitude or velocity, the corresponding pseudo-system denotes the unidirectional propagation of nonlinear, dispersive, long waves \cite{t}, while $u(x, t)$ represents the population density, the corresponding pseudo-problem can describe the aggregation of populations \cite{p}.}} The authors in \cite{Showalter1970Pseudoparabolic,tt} investigated the initial-boundary value problem and Cauchy problem for the linear pseudo-parabolic equation and established the existence and uniqueness of solutions. After those precursory results, there are many paper (for example \cite{bbb,cao,jism,kkk,kk,pp}) studied nonlinear pseudo-parabolic equations, like semilinear pseudo-parabolic equations and even including singular and degenerate pseudo-parabolic equations.

Problem \eqref{physic} has been proposed and studied in \cite{dihuafei}, the authors considered the weak solution of the problem. For convenience, we give some abbreviations,
\begin{equation*}\begin{split}
\|u\|_{\ml^{\fr{n}{p}}_p{(\BB)}}=\left(\iE |u|^p\frac{dx_1}{x_1}dx'\right)^{\fr{1}{p}},\ \ \|u\|_{\tilde{\mathcal{H}}^{1,\fr{n}{2}}_{2,0}(\BB)}^2=\|u\|_{\ml^{\fr{n}{2}}_2{(\BB)}}^2+\|\nabla_{\BB}u\|_{\ml^{\fr{n}{2}}_2{(\BB)}}^2,
\end{split}\end{equation*}
and
\begin{equation*}
(u, v)=\iE uv\frac{dx_1}{x_1}dx'.
\end{equation*}

The following definition for the weak solution to problem \eqref{physic} can be found in \cite[Definition 2.3]{dihuafei}.

\begin{definition}\label{def}
Function $u\in L^\infty\left(0, T; \tilde{\mathcal{H}}^{1,\fr{n}{2}}_{2,0}(\BB)\right)$ with $u_t\in L^2\left(0, T; \tilde{\mathcal{H}}^{1,\fr{n}{2}}_{2,0}(\BB)\right)$ is called a weak solution of problem \eqref{physic}, if $u(x, 0)=u_0\in\tilde{\mathcal{H}}^{1,\fr{n}{2}}_{2,0}(\BB)\backslash\{0\}$ and $u$ satisfies \eqref{physic} in the following distribution sense,
\begin{equation}\label{rj}
\left(u_t, \varphi\right)+\left(\nabla_{\BB}u_t, \nabla_{\BB}\varphi\right)+\left(\nabla_{\BB}u, \nabla_{\BB}\varphi\right)=
\left(|u|^{p-1}u-\frac{1}{|\mathbb{B}|}\iE |u|^{p-1}u\frac{dx_1}{x_1}dx',\ \varphi\right),
\end{equation}
for any $\varphi\in\tilde{\mathcal{H}}^{1,\fr{n}{2}}_{2,0}(\BB)$ and a.e. $t\in (0,T)$.
\end{definition}

Roughly speaking, the authors in \cite{dihuafei} combined the modified methods of Galerkin approximation, potential well \cite{Liu2003On,Payne1975Saddle,Sattinger1968On}, concavity \cite{Levine1973Some,Tsutsumi1972On} and the classical variational theory to prove the existence of global solutions and finite time blow-up for problem \eqref{physic} at the three different initial energy levels, i.e., subcritical initial energy $J(u_0)<d$, critical initial energy $J(u_0)=d$ and high initial energy $J(u_0)>d$, here $J(u)$ is the potential energy associated with problem \eqref{physic},
\begin{equation*}\begin{split}
J(u)=\fr{1}{2}\iE |\nE u|^2\da-\fr{1}{p+1}\iE |u|^{p+1}\da,
\end{split}\end{equation*}
and $d$ is the depth of potential well, which can be defined by
\begin{equation*}\begin{split}
d=\inf\left\{\sup_{\la\geq0}J(\la u),\ u\in\tilde{\mathcal{H}}^{1,\fr{n}{2}}_{2,0}(\BB), \iE|\nE u|^2\da\not=0\right\}.
\end{split}\end{equation*}
The energy functional $J(u)$ plays very important role in the whole analysis process, an other key functional is the Nehari functional
\begin{equation*}\begin{split}
I(u)=\iE |\nE u|^2\da-\iE |u|^{p+1}\da.
\end{split}\end{equation*}
The importance of $J(u)$ and $I(u)$ comes from the fact that they have considerable fine characters. In fact, we can infer from \cite[(2.8)]{dihuafei}, i.e.,
\begin{equation}\label{cw2}
\frac{d}{dt}J(u)=-\|u_t\|^2_{\tilde{\mathcal{H}}^{1,\fr{n}{2}}_{2,0}(\BB)},
\end{equation}
that the energy functional is nonincreasing with respect to $t$. Moreover, by \cite[(4.17)]{dihuafei}, i.e.,
\begin{equation}\begin{split}\label{cw3}
 \frac{1}{2}\frac{d}{dt}\|u\|^2_{\tilde{\mathcal{H}}^{1,\fr{n}{2}}_{2,0}(\BB)}&=\iE\left( -|\nE u|^2+|u|^{p+1}\right)\da-\frac{S(u_0)}{|\BB|}\|u\|^p_{\mathcal{L}^{\fr{n}{p}}_{p}(\BB)}\\
&=-I(u)-\frac{S(u_0)}{|\BB|}\|u\|^p_{\mathcal{L}^{\fr{n}{p}}_{p}(\BB)},
\end{split}\end{equation}
we know there is a well relation between $\frac{d}{dt}\|u\|^2_{\tilde{\mathcal{H}}^{1,\fr{n}{2}}_{2,0}(\BB)}$ and $-I(u)$, and this character is a powerful tool in the proofs of main results in \cite{dihuafei}. Moreover, in \cite[(2.6)]{dihuafei}, the author claimed
\begin{equation}\begin{split}\label{cw1}
\frac{d}{dt}\iE u\frac{dx_1}{x_1}dx'=\iE u_t\frac{dx_1}{x_1}dx'=0,
\end{split}
\end{equation}
this is also a significant property to the weak solution of problem \eqref{physic} since which implies that $\iE u(t)\frac{dx_1}{x_1}dx'$ remains constant in time for all $t\in (0, T)$, namely,
\begin{equation}\label{shd}
S(u_0):=\iE u\frac{dx_1}{x_1}dx'=\iE u_0\frac{dx_1}{x_1}dx'.
\end{equation}

However, in order to get \eqref{cw2}, the authors in \cite{dihuafei} replaced $\varphi$ by $u_t$ in \eqref{rj}, similarly, they took $\varphi$ by $u$ in \eqref{rj} to obtain \eqref{cw3}. It must be point out that such two treatments only hold formally due to the lack of regularity of weak solution. By the definition of weak solution, for any $t\in (0, T)$, there holds
\begin{equation}\label{qjxd}
u\in L^\infty\left(0, T; \tilde{\mathcal{H}}^{1,\fr{n}{2}}_{2,0}(\BB)\right)\ \ \mbox{and}\ \ u_t\in L^2\left(0, T; \tilde{\mathcal{H}}^{1,\fr{n}{2}}_{2,0}(\BB)\right).
\end{equation}
{\color{blue}{Then we can infer from \cite{Cazenave,Evans}, or as proved in \cite[Page 17]{dihuafei} by Aubin-Lions-Simon lemma (see \cite[Corollary 4]{Simon}) to obtain }}
\begin{equation}\label{zjsk}
u\in C\left([0, T); \tilde{\mathcal{H}}^{1,\fr{n}{2}}_{2,0}(\BB)\right).
\end{equation}
Further, by the continuous embedding $\tilde{\mathcal{H}}^{1,\fr{n}{2}}_{2,0}(\BB)\hookrightarrow C(\BB)$ we have
\begin{equation}\label{zjsk2}
u\in C\left([0, T); C(\BB)\right).
\end{equation}
To the best of our knowledge, the highest regularity of weak solution with respect to $t$ given by \cite{dihuafei} are \eqref{zjsk} and \eqref{zjsk2}. Nevertheless, neither of them can not lead to $\|u\|^2_{\tilde{\mathcal{H}}^{1,\fr{n}{2}}_{2,0}(\BB)},\ J(u),\ \iE u\frac{dx_1}{x_1}dx' \in C^1(0, T)$.

Actually, \eqref{cw2} and \eqref{cw3} need at least the weak solution of problem \eqref{physic} satisfying $u\in C^1\left(0, T; \tilde{\mathcal{H}}^{1,\fr{n}{2}}_{2,0}(\BB)\right)$, and \eqref{cw1} requires the weak solution $u\in C^1\left(0, T; \mathcal{L}^{n}_1(\BB)\right)$.

The first purpose of this paper is to improve the regularity of the weak solution and then reestablish \eqref{cw2}, \eqref{cw3} and \eqref{cw1}, our result can be stated as follow.

\begin{theorem}\label{yiyou1}
Assume $u=u(x, t)$ is the unique weak solution of problem \eqref{physic}, then
\begin{equation*}
u\in C^1\left( 0, T; \tilde{\mathcal{H}}^{1,\fr{n}{2}}_{2,0}(\BB)\right).
\end{equation*}
Moreover, \eqref{cw2}, \eqref{cw3} and \eqref{cw1} hold for any $t\in(0, T)$.
\end{theorem}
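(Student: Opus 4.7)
The plan is to regard problem \eqref{physic} as an abstract ODE in the Hilbert space $\tilde{\mathcal{H}}:=\tilde{\mathcal{H}}^{1,\fr{n}{2}}_{2,0}(\BB)$, equipped with the inner product $\langle u,v\rangle_{\tilde{\mathcal{H}}}=(u,v)+(\nE u,\nE v)$, and bootstrap the regularity of $u$ from this structure. Rewriting \eqref{rj} in the form
\[
\langle u_t,\varphi\rangle_{\tilde{\mathcal{H}}}=-(\nE u,\nE\varphi)+\left(|u|^{p-1}u-\tfrac{1}{|\BB|}\iE|u|^{p-1}u\,\da,\,\varphi\right)=:G(u)[\varphi],
\]
the Riesz representation theorem furnishes an operator $F:\tilde{\mathcal{H}}\to\tilde{\mathcal{H}}$ with $\langle F(u),\varphi\rangle_{\tilde{\mathcal{H}}}=G(u)[\varphi]$ for every $\varphi$, turning the weak formulation into the ODE $u_t=F(u)$ holding a.e.\ as an identity in $\tilde{\mathcal{H}}$.

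The analytical crux is then the local Lipschitz continuity of $F$. The gradient contribution is trivially bounded; for the nonlinear contribution the assumption $p+1<2^*$ supplies the continuous cone Sobolev embedding $\tilde{\mathcal{H}}\hookrightarrow\ml^{\fr{n}{p+1}}_{p+1}(\BB)$, and combined with the elementary inequality $||a|^{p-1}a-|b|^{p-1}b|\leq p(|a|^{p-1}+|b|^{p-1})|a-b|$ and H\"older with exponents $\tfrac{p+1}{p-1},\,p+1,\,p+1$, it yields
\[
\|F(u)-F(v)\|_{\tilde{\mathcal{H}}}\leq C\bigl(1+\|u\|_{\tilde{\mathcal{H}}}^{p-1}+\|v\|_{\tilde{\mathcal{H}}}^{p-1}\bigr)\|u-v\|_{\tilde{\mathcal{H}}}.
\]
Since \eqref{zjsk} already supplies $u\in C([0,T);\tilde{\mathcal{H}})$, composition gives $F\circ u\in C([0,T);\tilde{\mathcal{H}})$. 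Because $u_t\in L^2([0,T);\tilde{\mathcal{H}})$ coincides a.e.\ with this continuous representative, the Bochner fundamental theorem yields $u(t)=u_0+\int_0^tF(u(s))\,ds$ and hence $u\in C^1([0,T);\tilde{\mathcal{H}})$ with $u_t=F(u)$ everywhere, which is slightly stronger than the claimed regularity.

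Given this $C^1$ regularity, equations \eqref{cw1}--\eqref{cw3} follow from honest test-function choices in \eqref{rj}. Plugging $\varphi\equiv 1\in\tilde{\mathcal{H}}$ kills the gradient terms and cancels the reaction terms against each other, leaving $(u_t,1)=0$; combined with continuity of $u\mapsto\iE u\,\da$ on $\tilde{\mathcal{H}}$ and the chain rule for $C^1$ curves, this proves \eqref{cw1} and \eqref{shd}. Plugging $\varphi=u_t(t)$---now a legitimate choice for every $t\in(0,T)$ rather than merely a.e.---eliminates the nonlocal term via \eqref{cw1} and, together with the chain rule $\fr{d}{dt}J(u(t))=J'(u(t))[u_t(t)]$ (valid because $J$ is Fr\'echet $C^1$ on $\tilde{\mathcal{H}}$), gives \eqref{cw2}. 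Finally, plugging $\varphi=u(t)$ and substituting \eqref{shd} for the nonlocal factor yields \eqref{cw3} after differentiating $\tfrac12\|u(t)\|^2_{\tilde{\mathcal{H}}}$ via the chain rule.

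The main obstacle is establishing the local Lipschitz bound on the Nemitsky-type map $u\mapsto|u|^{p-1}u$ as a continuous operator into $\tilde{\mathcal{H}}^*$; this is exactly where subcriticality $p+1<2^*$ and the cone Sobolev embedding do the decisive work, and without it the bootstrap fails. A minor but indispensable point is that $\varphi\equiv 1$ must belong to $\tilde{\mathcal{H}}$ and the functional $u\mapsto\iE u\,\da$ must be continuous there; both follow from $|\BB|<\infty$ together with the embedding $\tilde{\mathcal{H}}\hookrightarrow\ml^{\fr{n}{2}}_{2}(\BB)$.
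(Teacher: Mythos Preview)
Your proof is correct and takes a genuinely different route from the paper.  The paper proceeds by hand: it first proves \eqref{cw1} by inserting a piecewise-linear-in-time cutoff $\chi_\delta(t)$ (constant in $x$) into the weak formulation, integrating in $t$, and passing $\delta\to 0$; it then bootstraps the differentiability of $\|u(t)\|_{\ml^{n/2}_2}^2$ and $\|\nE u(t)\|_{\ml^{n/2}_2}^2$ via explicit difference-quotient estimates, at one point writing $\iE u_t\,\Delta_\BB u\,\da$ and invoking $\|\Delta_\BB u\|_{\ml^{n/2}_2}$, which strictly speaking presupposes second-order regularity not guaranteed by Definition~\ref{def}.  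Your argument sidesteps all of this by recognizing that the pseudo-parabolic structure lets the weak formulation be read as an abstract ODE $u_t=F(u)$ in $\tilde{\mathcal{H}}$, with $F$ locally Lipschitz thanks to the subcritical embedding; the continuity of $u$ from \eqref{zjsk} then forces $u_t=F(u)$ to be continuous, delivering $u\in C^1([0,T);\tilde{\mathcal{H}})$ directly.  This is cleaner, avoids the $\Delta_\BB u$ issue, and actually establishes the Banach-space differentiability claimed in the theorem statement (the paper's computations, as written, only show that the scalar quantities $\|u(t)\|^2_{\tilde{\mathcal{H}}}$ and $J(u(t))$ are $C^1$ in $t$, which is weaker).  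One small caveat: your justification that $1\in\tilde{\mathcal{H}}$ is backwards---the embedding $\tilde{\mathcal{H}}\hookrightarrow\ml^{n/2}_2$ does not help here---but the paper's own Lemma~\ref{l1} tacitly makes the same assumption by using $\chi_\delta(t)$ as a test function, so you are on equal footing.
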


In order to introduce the second goal of the present paper naturally, we first give some notations and definitions used in \cite{dihuafei}. Throughout the paper, we define the Nehari manifold by
\begin{equation*}\begin{split}
N=\left\{u\in\tilde{\mathcal{H}}^{1,\fr{n}{2}}_{2,0}(\BB): I(u)=0, \iE |\nE u|^2\da\not=0\right\},
\end{split}\end{equation*}
which separates the two regions
\begin{equation*}\begin{split}
& N_+=\left\{u\in\tilde{\mathcal{H}}^{1,\fr{n}{2}}_{2,0}(\BB): I(u)>0\right\}\cup\{0\},\\
& N_-=\left\{u\in\tilde{\mathcal{H}}^{1,\fr{n}{2}}_{2,0}(\BB): I(u)<0\right\}.
\end{split}\end{equation*}
Moreover, it is well-known \cite{3a} that the depth of potential well $d$ may also be defined by $d=\inf_{u\in N}J(u)$. For $k\in\R$, we define the sublevels of $J$ by
$$J^{k}:=\left\{u\in\tilde{\mathcal{H}}^{1,\fr{n}{2}}_{2,0}(\BB)\ \left|\right.J(u)\leq k\right\},$$
and for all $\alpha>d$, we let
\begin{equation*}
\begin{split}
N^\alpha:=N\cap J^\alpha
         =\left\{u\in N\left|\ \|\na_{\BB}u\|_{\mathcal{L}^{\fr{n}{2}}_2(\BB)}\leq\sqrt{\frac{2\alpha(p+1)}{p-1}}\right.\right\}.
\end{split}\end{equation*}
For such $\alpha$, we further define
\begin{equation}\label{laa}
\begin{split}
\Lambda_\alpha=\sup\left\{\left.\|u\|^2_{\tilde{\mathcal{H}}^{1,\fr{n}{2}}_{2,0}(\BB)} \right|u\in N^\alpha\right\},
\end{split}\end{equation}
{\color{blue}{ the conclusion in \cite[Lemma 3.5]{dihuafei} tells us that $\Lambda_\alpha\in(0, +\infty)$ is a constant. }}

After exploring the properties of the potential wells and the invariant sets in cone Sobolev spaces, the authors in \cite{dihuafei} considered global existence, exponential decay and blow-up of weak solution to problem \eqref{physic}. Moreover, they estimated the blow-up time in some cases. The following  result can be found in \cite[Corollary 6.2]{dihuafei}.

\begin{proposition}\label{888}
Let $u(x,t)$ be the weak solution of problem \eqref{physic} with $I(u_0)<0, S(u_0)\leq 0$ and one of the following two conditions
holds:
\begin{itemize}
  \item [(i).] $J(u_0)\leq d$;
  \item [(ii).] $J(u_0)>d$ and
  \begin{equation}\label{kqd}
  \|u_0\|^2_{\tilde{\mathcal{H}}^{1,\fr{n}{2}}_{2,0}(\BB)}>\Lambda_{J(u_0)},
  \end{equation}
\end{itemize}
then $u(x,t)$ blows up in finite time, where $S(u_0)$ and $\Lambda_{J(u_0)}$ given by \eqref{shd} and \eqref{laa} respectively.
\end{proposition}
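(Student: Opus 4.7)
My strategy is the Levine concavity method on an auxiliary integral--quadratic functional, preceded by an invariance step establishing that $N_-$ is flow-invariant under each of the hypotheses (i) and (ii). Theorem \ref{yiyou1} is what makes the chain-rule manipulations rigorous; \eqref{cw1}, \eqref{cw2} and \eqref{cw3} will be used freely.

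\textbf{Step 1: invariance of $N_-$ and monotonicity of the norm.} Writing $H:=\tilde{\mathcal{H}}^{1,\fr{n}{2}}_{2,0}(\BB)$ and letting $[0,T_{\max})$ denote the maximal existence interval, I first set $t_0:=\sup\{t\in[0,T_{\max}):I(u(s))<0\text{ for all }s\in[0,t]\}$. By continuity $u\in C([0,T_{\max});H)$, if $t_0<T_{\max}$ then $I(u(t_0))=0$. From \eqref{cw3} and $S(u_0)\le 0$,
\[\fr{d}{dt}\|u(t)\|^2_H=-2I(u(t))-\fr{2S(u_0)}{|\BB|}\|u(t)\|^p_{\ml^{\fr{n}{p}}_p(\BB)}>0\quad\text{on }[0,t_0),\]
so $\|u(t_0)\|^2_H>\|u_0\|^2_H>0$ and $u(t_0)\in N$. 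In case (i), $J(u(t_0))\le J(u_0)\le d=\inf_N J$ combined with the strict decay of $J$ from \eqref{cw2} yields a contradiction. In case (ii), $u(t_0)\in N\cap J^{J(u_0)}=N^{J(u_0)}$ gives $\|u(t_0)\|^2_H\le\Lambda_{J(u_0)}$, contradicting $\|u(t_0)\|^2_H>\|u_0\|^2_H>\Lambda_{J(u_0)}$. Hence $I(u(t))<0$ throughout $[0,T_{\max})$ and $t\mapsto\|u(t)\|^2_H$ is strictly increasing.

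\textbf{Step 2: concavity functional and differential inequality.} For $T\in(0,T_{\max})$ and $\beta,\tau>0$ to be chosen, I define
\[\Phi(t):=\int_0^t\|u(s)\|^2_H\,ds+(T-t)\|u_0\|^2_H+\beta(t+\tau)^2.\]
Theorem \ref{yiyou1} justifies $\fr{d}{ds}\|u(s)\|^2_H=2(u,u_t)+2(\nE u,\nE u_t)$. Using $-2I(u)=(p-1)\|\nE u\|^2_{\ml^{\fr{n}{2}}_2(\BB)}-2(p+1)J(u)$, $J(u(t))=J(u_0)-\int_0^t\|u_t\|^2_H\,ds$ from \eqref{cw2}, and dropping $-\fr{2S(u_0)}{|\BB|}\|u\|^p_{\ml^{\fr{n}{p}}_p(\BB)}\ge 0$, I get
\[\Phi''(t)\ge(p-1)\|\nE u(t)\|^2_{\ml^{\fr{n}{2}}_2(\BB)}+2(p+1)\!\int_0^t\!\|u_t\|^2_H\,ds-2(p+1)J(u_0)+2\beta.\]
A double Cauchy--Schwarz on $\Phi'(t)=2\int_0^t[(u,u_t)+(\nE u,\nE u_t)]\,ds+2\beta(t+\tau)$ gives $(\Phi'(t))^2\le 4\Phi(t)\bigl[\int_0^t\|u_t\|^2_H\,ds+\beta\bigr]$. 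Setting $\gamma:=(p-1)/2$, so that $4(1+\gamma)=2(p+1)$, and combining,
\[\Phi\Phi''-(1+\gamma)(\Phi')^2\ge\Phi\bigl[(p-1)\|\nE u(t)\|^2_{\ml^{\fr{n}{2}}_2(\BB)}-2(p+1)J(u_0)-2p\beta\bigr].\]
Once the bracket is nonnegative, $(\Phi^{-\gamma})''\le 0$ while $\Phi(0)>0$ and $(\Phi^{-\gamma})'(0)=-2\gamma\beta\tau\,\Phi(0)^{-\gamma-1}<0$ force $\Phi(t)\to\infty$ at some $t^\ast\le\Phi(0)/(2\gamma\beta\tau)$, and choosing $T>t^\ast$ gives $T_{\max}\le t^\ast$.

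\textbf{Verifying the bracket; main obstacle.} In case (i), the standard ray-projection argument on $N_-$ delivers the uniform bound $\|\nE u(t)\|^2_{\ml^{\fr{n}{2}}_2(\BB)}>\fr{2(p+1)d}{p-1}\ge\fr{2(p+1)J(u_0)}{p-1}$, so $\beta>0$ taken small enough makes the bracket nonnegative. In case (ii), I would propagate $\|u(t)\|^2_H>\Lambda_{J(u_0)}$ (from Step 1) along the flow and combine it with the sup-characterization of $\Lambda$ over $N^{J(u_0)}$: assuming $\|\nE u(t)\|^2_{\ml^{\fr{n}{2}}_2(\BB)}\le\fr{2(p+1)J(u_0)}{p-1}$ for some $t$, the Nehari ray-projection $\lambda^\ast u(t)\in N$ (with $\lambda^\ast\in(0,1)$) would satisfy $J(\lambda^\ast u(t))\le J(u_0)$, placing it in $N^{J(u_0)}$ and producing a size constraint on $\|u(t)\|^2_H$ incompatible with the propagated geometric separation. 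The principal difficulty is precisely this conversion in the high-energy case, where the generic Nehari lower bound $\fr{2(p+1)d}{p-1}$ no longer beats $J(u_0)$; quantitatively translating $\|u_0\|^2_H>\Lambda_{J(u_0)}$ (a supremum over the Nehari slice) into a \emph{pointwise} lower bound on $\|\nE u(t)\|^2_{\ml^{\fr{n}{2}}_2(\BB)}$ is the delicate technical point. The non-local term poses no obstacle because $S(u_0)\le 0$ and, via \eqref{cw1} (now rigorous by Theorem \ref{yiyou1}), $S(u(t))=S(u_0)\le 0$ throughout.
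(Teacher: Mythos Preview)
First, note that Proposition~\ref{888} is not proved in the present paper at all: it is quoted verbatim from \cite[Corollary~6.2]{dihuafei}, and the paper's contribution (Theorem~\ref{djtk}) is precisely to \emph{remove} hypothesis~(ii). So there is no in-paper proof to compare against for this statement.

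Your Step~1 (invariance of $N_-$) and the concavity argument in Step~2 are correct for case~(i); this is the standard Levine scheme, and the bracket $(p-1)\|\nabla_{\BB}u\|^2-2(p+1)J(u_0)-2p\beta$ is indeed handled by the Nehari lower bound $\|\nabla_{\BB}u\|^2>\tfrac{2(p+1)d}{p-1}$ once $J(u_0)<d$ (the borderline $J(u_0)=d$ needs the usual time-shift, which you should make explicit).

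For case~(ii), however, your proposed ray-projection closure does not work. If $\lambda^\ast\in(0,1)$ is the Nehari projection of $u(t)$, then $\lambda^\ast u(t)\in N^{J(u_0)}$ gives only
\[
(\lambda^\ast)^2\,\|u(t)\|^2_{\tilde{\mathcal{H}}^{1,\frac{n}{2}}_{2,0}(\BB)}=\|\lambda^\ast u(t)\|^2_{\tilde{\mathcal{H}}^{1,\frac{n}{2}}_{2,0}(\BB)}\le\Lambda_{J(u_0)},
\]
hence $\|u(t)\|^2_{\tilde{\mathcal{H}}^{1,\frac{n}{2}}_{2,0}(\BB)}\le\Lambda_{J(u_0)}/(\lambda^\ast)^2$, which is \emph{larger} than $\Lambda_{J(u_0)}$ and therefore does not contradict the propagated bound $\|u(t)\|^2_{\tilde{\mathcal{H}}^{1,\frac{n}{2}}_{2,0}(\BB)}>\Lambda_{J(u_0)}$. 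Since $\lambda^\ast$ can be arbitrarily small, no uniform lower bound on $\|\nabla_{\BB}u(t)\|^2$ in terms of $J(u_0)$ follows, and the bracket in your concavity inequality cannot be made nonnegative this way. This is exactly the ``delicate technical point'' you flag, and your sketch does not resolve it.

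For comparison, the paper sidesteps this obstacle completely: Lemma~\ref{bubianj} uses a Gronwall argument on $I(u(t))$ (from \eqref{cw2}, \eqref{cw3}, \eqref{scs}) to obtain $I(u(t))\le I(u(t_0))e^{(p-1)(t-t_0)}\to-\infty$, and then the proof of Theorem~\ref{djtk} argues by contradiction that global existence would force $\int_{t_0}^\infty\|u_t\|^2_{\tilde{\mathcal{H}}^{1,\frac{n}{2}}_{2,0}(\BB)}<\infty$, hence $\|u_t(t_n)\|\to0$ along a sequence, which via \eqref{cw3} bounds $|I(u(t_n))|$ --- incompatible with the exponential divergence. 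This argument uses neither \eqref{kqd} nor any pointwise lower bound on $\|\nabla_{\BB}u\|^2$, and in fact shows that hypothesis~(ii) is superfluous.
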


For above proposition, one may ask that whether conditions \eqref{kqd} can be removed? In other words, can we have
\begin{equation}\label{ls}
u_0\in N_-,\ S(u_0)\leq 0\Rightarrow u(x, t)\ \mbox{blows up in finite time}?
\end{equation}
Analogous question has been came up in \cite{Gazzola2005Finite}, the corresponding initial-boundary value problem to semilinear parabolic equation
\begin{equation*}
u_t-\Delta u=|u|^{p-2}u
\end{equation*}
was studied therein, here $\Delta$ is the classical Laplace operator. Dickstein et al. in \cite{Dickstein} subsequently proved that the answer is negative and there exist solutions converging to
any given steady state, with initial Nehari energy $I(u_0)$ either negative or positive. Contrary to the semilinear parabolic equation, Zhu et al. in \cite{zsc} got a positive answer to the corresponding initial-boundary value problem with pseudo-parabolic equation
\begin{equation*}
u_t-\Delta u_t-\Delta u+u=|u|^{p-2}u.
\end{equation*}
They found a sharp result about the global existence and blow-up in finite time, and proved $u_0\in N_-$ is a sufficient and necessary condition for finite time blow-up of solutions.

The second goal of present paper is to prove that \eqref{ls} holds for the weak solution to problem \eqref{physic}. We apply and extend the method used in \cite{zsc} to the case of nonlocal semilinear pseudo-parabolic equation with conical degeneration, then we get rid of the conditions \eqref{kqd} and improve the blow-up result of \cite{dihuafei}. By \eqref{cw3} we can see the sign of initial integral $S(u_0)$ may be essential for the proofs of global existence and finite time blow-up, which is caused by the nonlocal term $\frac{1}{|\mathbb{B}|}\iE |u|^{p-1}u\frac{dx_1}{x_1}dx'$. So the use of the idea is by far nontrivial because the existence of nonlocal term and because more analyses are necessary to overcome some technical points.

In fact, as claimed in \cite[Remark 5.3]{zsc}, the weak solution with initial data in the set $N_+$ may also blow up. Thus, we can not describe the blow-up phenomenon completely with the means of $N_-$ only. Let
\begin{equation*}\begin{split}
&S^-:=\left\{u_0\in\tilde{\mathcal{H}}^{1,\fr{n}{2}}_{2,0}(\BB): \mbox{there is a}\ t_0\in [0, T)\ \mbox{such that}\ u(x, t_0)\in N_- \right\},
\end{split}\end{equation*}
then one can prove that $S^-$ is strictly larger than $N_-$, see \cite[Remark 5.4]{zsc}. For some $t_0\geq0$, if the corresponding weak solution $u(x, t_0)\in N_-$, we have the following theorem of blow-up phenomenon.

\begin{theorem}\label{djtk}
Let $u(t)=u(x,t)$ be the weak solution of problem \eqref{physic} with $S(u_0)\leq 0$, then $u(t)$ blows up at finite time $T$ if and only if there exists a $t_0\in [0, T)$ such that $u(t_0)\in N_-$. Moreover, for all $t\in[t_0, T)$, $u(t)$ grows as the following sense
\begin{equation*}
\|u(t)\|^2_{\tilde{\mathcal{H}}^{1,\fr{n}{2}}_{2,0}(\BB)}\geq -2I(u(t_0))e^{(p-1)(t-t_0)}(t-t_0)+\|u(t_0)\|^2_{\tilde{\mathcal{H}}^{1,\fr{n}{2}}_{2,0}(\BB)}.
\end{equation*}
\end{theorem}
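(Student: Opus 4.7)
The proof splits along the two implications.

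\textbf{Necessity.} I argue the contrapositive: if $u_0\notin S^-$, then $I(u(t))\geq 0$ for every $t\in[0,T)$. The identity $(p+1)J(u)-I(u)=\fr{p-1}{2}\iE|\nE u|^2\da$ combined with the energy dissipation \eqref{cw2} yields the a priori bound $\iE|\nE u(t)|^2\da\leq\fr{2(p+1)}{p-1}J(u_0)$. Decomposing $u(t)=(u(t)-S(u_0)/|\BB|)+S(u_0)/|\BB|$ and applying a Poincar\'e--Wirtinger inequality on the cone Sobolev space---legitimate because the centered part has zero mean by \eqref{shd}---controls $\|u(t)\|_{\ml^{\fr{n}{2}}_2(\BB)}$. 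Hence $M(t):=\|u(t)\|^2_{\tilde{\h}^{1,\fr{n}{2}}_{2,0}(\BB)}$ stays uniformly bounded on $[0,T)$, and the blow-up alternative forces $T=+\infty$, contradicting blow-up.

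\textbf{Sufficiency.} Fix $t_0\in[0,T)$ with $u(t_0)\in N_-$. The plan has three stages. First, I establish forward invariance $u(t)\in N_-$ for all $t\in[t_0,T)$: if not, let $t_1\in(t_0,T)$ be the first time at which $I(u(t_1))=0$. Testing \eqref{rj} against $\varphi=u_t$ and using $\iE u_t\da=0$ (from \eqref{cw1}, justified by Theorem \ref{yiyou1}), I compute
\begin{equation*}
\fr{d}{dt}I(u(t))=-(p-1)(\nE u,\nE u_t)_{\ml^{\fr{n}{2}}_2(\BB)}-(p+1)\|u_t\|^2_{\tilde{\h}^{1,\fr{n}{2}}_{2,0}(\BB)}.
\end{equation*}
Combined with $S(u_0)\leq 0$ in \eqref{cw3} and a careful sign analysis of the nonlocal term $Q(u(t)):=\iE|u|^{p-1}u\,\da$, this produces a differential inequality forcing $I(u(t))<0$ throughout $[t_0,t_1)$, contradicting $I(u(t_1))=0$ via continuity. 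Second, with invariance secured, \eqref{cw3} yields $M'(t)\geq-2I(u(t))$; coupling this with the identity $I=(p+1)J-\fr{p-1}{2}\iE|\nE u|^2\da$, the energy decay \eqref{cw2}, and an integrating-factor Gronwall argument against the linear Cauchy problem $\phi'-(p-1)\phi=-2I(u(t_0))e^{(p-1)(t-t_0)}$ with $\phi(t_0)=0$ (whose explicit solution is $-2I(u(t_0))(t-t_0)e^{(p-1)(t-t_0)}$) delivers the announced growth bound. Third, to upgrade this exponential-polynomial growth to genuine finite-time blow-up, I apply a Levine-type concavity argument to $F(t):=\int_{t_0}^{t}M(\tau)\,d\tau+(T_1-t)M(t_0)+\beta(t+\gamma)^2$ with suitable parameters $T_1,\beta,\gamma>0$: using the amended identities \eqref{cw2}--\eqref{cw3} and Cauchy--Schwarz I derive $FF''-\fr{p+3}{4}(F')^2\geq 0$, which forces $F^{-(p-1)/4}$ to vanish at some finite $T<+\infty$.

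\textbf{Main obstacle.} The principal hurdle is the forward invariance at high initial energy $J(u_0)>d$, where one cannot appeal to the mountain-pass level as a barrier (as in \eqref{laa} and \eqref{kqd} in Proposition~\ref{888}). The hypothesis $S(u_0)\leq 0$ is essential for securing the favorable sign in \eqref{cw3}, but because the sign of $Q(u(t))$ is not pointwise determined by that of $S(u_0)$, the argument requires integral inequalities tailored to the conical weight $dx_1/x_1\,dx'$ together with the $C^1$-in-time regularity of Theorem \ref{yiyou1}. This is precisely the nontrivial extension of \cite{zsc} alluded to in the introduction.
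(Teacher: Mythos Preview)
Your invariance argument has a genuine gap. You compute
\[
\frac{d}{dt}I(u(t))=-(p-1)(\nabla_{\BB}u,\nabla_{\BB}u_t)_{\mathcal{L}^{\fr{n}{2}}_2(\BB)}-(p+1)\|u_t\|^2_{\tilde{\mathcal{H}}^{1,\fr{n}{2}}_{2,0}(\BB)},
\]
but the cross term $(\nabla_{\BB}u,\nabla_{\BB}u_t)$ has no definite sign, and ``careful sign analysis of $Q(u(t))$'' is not an argument. Moreover, your first-exit-time scheme is circular: by definition of $t_1$ you already have $I(u(t))<0$ on $[t_0,t_1)$, so there is nothing to ``force''; what you need is a \emph{quantitative} lower bound on $|I(u(t))|$ that survives the limit $t\to t_1$. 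The paper obtains exactly this by writing $I=(p+1)J-\tfrac{p-1}{2}\|\nabla_{\BB}u\|^2_{\mathcal{L}^{\fr{n}{2}}_2(\BB)}$, differentiating, and using \eqref{cw2}, \eqref{cw3}, $S(u_0)\le0$ together with $\tfrac{d}{dt}\|u\|^2_{\mathcal{L}^{\fr{n}{2}}_2(\BB)}\le0$ to get the clean differential inequality $\tfrac{d}{dt}I(u(t))\le(p-1)I(u(t))$, hence by Gronwall
\[
I(u(t))\le I(u(t_0))\,e^{(p-1)(t-t_0)}<0,\qquad t\in[t_0,T).
\]
This exponential bound is the linchpin: it gives invariance, it gives the growth estimate by feeding directly into \eqref{cw3}, and it also drives the blow-up proof.

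Your blow-up mechanism is also different from the paper's, and more laborious. The paper does \emph{not} run a Levine concavity argument. Instead it argues by contradiction: if $T=+\infty$ and $J(u(t))\ge d$ for all $t\ge t_0$, then $J(u(t))$ is bounded and nonincreasing, so $\int_{t_0}^{\infty}\|u_t\|^2_{\tilde{\mathcal{H}}^{1,\fr{n}{2}}_{2,0}(\BB)}\,dt<\infty$, hence along some $t_n\to\infty$ one has $\|u_t(t_n)\|_{\tilde{\mathcal{H}}^{1,\fr{n}{2}}_{2,0}(\BB)}\to0$. Testing \eqref{rj} with $u(t_n)$ then bounds $|I(u(t_n))|$ uniformly, contradicting $|I(u(t_n))|\ge -I(u(t_0))e^{(p-1)(t_n-t_0)}\to\infty$ from the Gronwall bound above. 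This is shorter than setting up an auxiliary functional $F$ with free parameters. Your concavity route could in principle be made to work, but as written it depends on the invariance step you have not actually proved, and your growth-bound derivation (``integrating-factor Gronwall against $\phi'-(p-1)\phi=\dots$'') is not justified: you never show that $M(t)$ satisfies such an inequality.
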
  

\begin{remark}
As a special case, take $t_0=0$ in above conclusion, then for the initial data with $S(u_0)\leq 0$ we have
\begin{equation*}
I(u_0)<0,\ \mbox{i.e.}\ u_0\in N_-\ \ \Leftrightarrow\ \ \mbox{the weak solution blows up in finite time},
\end{equation*}
which gives a positive answer to \eqref{ls}.
\end{remark} 

\begin{remark}
It is worthwhile pointing out that our regularity result also can be applied to the weak solution of some related parabolic equations, and Theorem \ref{djtk} has some implications about blow-up solution for pseudo-parabolic with other types of nonlocal sources.
\end{remark}

For readers' convenience, in Section 2, we first introduce some definitions and properties of cone Sobolev spaces. The proofs of Theorem \ref{yiyou1} and \ref{djtk} will be given in Section 3 and Section 4, respectively.

\section{Preliminaries}

The detail research of manifold with conical singularities and the corresponding cone Sobolev spaces can be found in \cite{Chen2011Existence,Chen2012Cone,Coriasco2007Realizations,Schrohe2001Ellipticity,Schulze1999Boundary}. In this subsection, we shall introduce some definitions and properties of cone Sobolev spaces briefly, which is enough to make our paper readable.

Let $X$ be a closed, compact, $C^\infty$ manifold, we set $X^{\triangle}=(\bar\R_+\times X)/(\{0\}\times X)$ as a local model interpreted as a cone with the base $X$. We denote $X^\triangledown=\R_+\times X$ as the corresponding open stretched cone with $X$. An $n$-dimensional manifold $B$ with conical singularities is a topological space with a finite subset $B_0=\{b_1,\cdots,b_M\}\subset B$ of conical singularities. For simplicity, we assume that the manifold $B$ has only one conical point on the boundary. Thus, near the conical point, we have a stretched manifold $\BB$, associated with $B$.

\begin{definition}
Let $\BB=[0,1)\times X$ be the stretched manifold of the manifold $B$ with conical singularity, then for any cut-off function $\omega$, supported by a collar neighborhood of $(0,1)\times\p\BB$, the cone Sobolev space $\mathcal{H}^{m,\gamma}_{p}(\BB)$, for $m\in\mathbb{N}, \gamma\in\R$ and $1<p<+\infty$, is defined as $\mathcal{H}^{m,\gamma}_{p}(\BB)=\left\{u\in W_{loc}^{m,p}(\BB_0)\right|\left.\omega u\in\mathcal{H}^{m,\gamma}_{p}(X^\triangledown)\right\}$. Moreover, the subspace $\mathcal{H}^{m,\gamma}_{p,0}(\BB)$ of $\mathcal{H}^{m,\gamma}_{p}(\BB)$ is defined by
\begin{equation*}
  \mathcal{H}^{m,\gamma}_{p,0}(\BB)=\omega\mathcal{H}^{m,\gamma}_{p,0}(X^\triangledown)+(1-\omega)W_0^{m,p}(\BB_0),
\end{equation*}
where $W_0^{m,p}(\BB_0)$ denotes the closure of $C_0^\infty(\BB_0)$ in Sobolev spaces $W^{m,p}(\dt X)$, here $\dt X$ is a closed compact $C^\infty$ manifold of dimension $n$ that containing $\BB$ as a sub-manifold with boundary.
\end{definition}

\begin{definition}
We say $u(x)\in \mathcal{L}^\gamma_p(\BB)$ with $1<p<+\infty$ and $\gamma\in\R$ if
\begin{equation*}
  \|u\|^p_{\mathcal{L}^{\gamma}_p(\BB)}=\int_\BB x_1^n|x_1^{-\gamma}u(x)|^p\da<+\infty.
\end{equation*}
\end{definition}

Observe that if $u(x)\in\mathcal{L}^{\frac{n}{p}}_p(\BB), v(x)\in\mathcal{L}^{\frac{n}{q}}_q(\BB)$ with $p,q\in(1, +\infty)$ and $\frac{1}{p}+\frac{1}{q}=1$, then we have the following H\"{o}lder's inequality
\begin{equation*}
  \int_\BB|u(x)v(x)|\da\leq\|u\|_{\mathcal{L}^{\frac{n}{p}}_p(\BB)}\|v\|_{\mathcal{L}^{\frac{n}{q}}_q(\BB)}.
\end{equation*}

Integration by parts in cone Sobolev spaces is consistent with the one in classical Sobolev spaces, see \cite[Lemma 2.1]{dihuafei}.

\begin{lemma}
Assume that functions $u, v\in\tilde{\mathcal{H}}^{1,\fr{n}{2}}_{2,0}(\BB)$, then
\begin{equation*}
\iE v\Delta_\mathbb{B}u\da=-\iE\nabla_\mathbb{B}v\cdot\nabla_\mathbb{B}u\da.
\end{equation*}
\end{lemma}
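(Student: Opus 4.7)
The plan is a two-step argument: first establish the identity for smooth functions compactly supported in $\BB_0=(0,1)\times X$ by direct computation, then extend by density. The key input is that $C_0^\infty(\BB_0)$ is dense in $\tilde{\mathcal{H}}^{1,\frac{n}{2}}_{2,0}(\BB)$, which follows from the construction of these weighted cone Sobolev spaces recalled at the start of this section; the subscript $0$ is precisely what encodes the vanishing conditions at the singular stratum $\{0\}\times X$ needed so that no boundary contribution survives after integration by parts.

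For $u,v\in C_0^\infty(\BB_0)$, split the Fuchsian Laplacian into radial and tangential pieces, $\Delta_\BB=(x_1\partial_{x_1})^2+\sum_{i=2}^n\partial_{x_i}^2$, and treat each separately. For $i\in\{2,\ldots,n\}$, since $X$ is a closed compact manifold without boundary, Fubini combined with ordinary integration by parts on $X$ (using a finite partition of unity subordinate to coordinate charts) yields
\begin{equation*}
\iE v\,\partial_{x_i}^2 u\,\da=-\iE\partial_{x_i}v\,\partial_{x_i}u\,\da,
\end{equation*}
with the weight $dx_1/x_1$ being inert in these variables. For the radial part, I would substitute $t=-\log x_1$, so that $dt=-dx_1/x_1$ and $\partial_t=-x_1\partial_{x_1}$; this converts $(x_1\partial_{x_1})^2$ into $\partial_t^2$ and the weighted measure $dx_1/x_1$ into ordinary Lebesgue measure $dt$ on $(0,\infty)$. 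Because $u,v$ have compact support in $\BB_0$, they vanish near $x_1=0$ (i.e.\ $t=+\infty$) and near $x_1=1$ (i.e.\ $t=0$), and standard integration by parts in $t$ produces no boundary terms, giving
\begin{equation*}
\iE v\,(x_1\partial_{x_1})^2 u\,\da=-\iE(x_1\partial_{x_1}v)(x_1\partial_{x_1}u)\,\da.
\end{equation*}
Summing the radial and tangential contributions and recognizing $\nabla_\BB v\cdot\nabla_\BB u=(x_1\partial_{x_1}v)(x_1\partial_{x_1}u)+\sum_{i=2}^n\partial_{x_i}v\,\partial_{x_i}u$ establishes the identity on $C_0^\infty(\BB_0)$.

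For the extension, take approximating sequences $u_k,v_k\in C_0^\infty(\BB_0)$ with $u_k\to u$ and $v_k\to v$ in $\tilde{\mathcal{H}}^{1,\frac{n}{2}}_{2,0}(\BB)$. The right-hand side $-(\nabla_\BB v_k,\nabla_\BB u_k)$ converges to $-(\nabla_\BB v,\nabla_\BB u)$ by the Cauchy--Schwarz inequality on $\mathcal{L}^{n/2}_2(\BB)$. For general $u\in\tilde{\mathcal{H}}^{1,\frac{n}{2}}_{2,0}(\BB)$ the distribution $\Delta_\BB u$ need not lie in $\mathcal{L}^{n/2}_2(\BB)$, so the left-hand side must be interpreted in the duality sense; the smooth-function identity then unambiguously extends because the right-hand side is a continuous bilinear form on $\tilde{\mathcal{H}}^{1,\frac{n}{2}}_{2,0}(\BB)\times\tilde{\mathcal{H}}^{1,\frac{n}{2}}_{2,0}(\BB)$. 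The main obstacle is therefore the density of $C_0^\infty(\BB_0)$ in this weighted space: it is standard in the theory of cone Sobolev spaces but requires a mollification argument adapted to the singular measure $dx_1/x_1$ combined with a careful cutoff near $x_1=0$ that genuinely exploits the $0$-subscript, rather than the bare definition of $\tilde{\mathcal{H}}^{1,\frac{n}{2}}_2(\BB)$.
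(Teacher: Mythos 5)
Your proof is correct in outline, but note that the paper does not actually prove this lemma: it is quoted verbatim from the cited reference (``Integral by parts in cone Sobolev spaces is consistent with the one in classical Sobolev spaces, see [Di--Shang, Lemma 2.1]''), so the only thing to compare against is a citation. Your argument --- splitting $\Delta_\BB$ into the Fuchsian radial part and the tangential part, converting $(x_1\partial_{x_1})^2$ and the measure $\frac{dx_1}{x_1}$ into $\partial_t^2$ and $dt$ via $t=-\log x_1$ so that the weighted integration by parts becomes the ordinary one on $(0,\infty)$, and then passing to general $u,v$ by density --- is precisely the standard proof in the cone Sobolev literature, and it is sound. Two precisions are worth recording. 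First, the density of $C_0^\infty(\BB_0)$, which you correctly isolate as the crux, is essentially built into the definitions recalled in Section 2: $\mathcal{H}^{m,\gamma}_{p,0}(\BB)$ is assembled from $\omega\mathcal{H}^{m,\gamma}_{p,0}(X^\triangledown)$ and $W_0^{m,p}(\BB_0)$, both of which are closures of compactly supported smooth functions; however, the tilde space $\tilde{\mathcal{H}}^{1,\fr{n}{2}}_{2,0}(\BB)$ is never defined in this paper, so strictly speaking you are importing that definition from the cited reference as well, and if the tilde were to denote a Neumann-type space not obtained as such a closure, a boundary term at $\partial\BB$ would have to be discussed separately. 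Second, your observation that for general $u\in\tilde{\mathcal{H}}^{1,\fr{n}{2}}_{2,0}(\BB)$ the left-hand side only makes sense through the duality pairing (since $\Delta_\BB u$ need not lie in $\mathcal{L}^{\fr{n}{2}}_2(\BB)$) is a genuine refinement that neither this paper nor the citation makes explicit: as stated, the identity is really the definition of the distributional action of $\Delta_\BB u$ on $v$, and in the text it is only ever applied either in that sense or to smooth (Galerkin-type) functions where the classical reading is available.
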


By \cite[Propositions 2.2]{Chen2012Global}, or see \cite[Lemma 2.4]{dihuafei} directly, the corresponding cone Sobolev embedding can be stated as follows.

\begin{lemma}
For $1<p+1<\fr{2n}{n-2}$, the embedding $\tilde{\mathcal{H}}^{1,\fr{n}{2}}_{2,0}(\BB)\hookrightarrow\mathcal{H}^{0,\fr{n}{p+1}}_{p+1}(\BB)=\mathcal{L}^{\fr{n}{p+1}}_{p+1}(\BB)$ is continuous.
\end{lemma}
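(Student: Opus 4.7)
The plan is to reduce the cone Sobolev embedding to the classical Sobolev embedding via a partition of unity followed by a logarithmic change of variables that straightens the conical degeneration into a cylindrical end.

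First, I would choose a cut-off function $\omega$ supported in a collar neighborhood of $\p\BB = \{0\}\times X$ and equal to $1$ near $\p\BB$, and decompose any $u \in \tilde{\mathcal{H}}^{1,n/2}_{2,0}(\BB)$ as $u = \omega u + (1-\omega)u$. On the support of $1-\omega$, the weight $x_1^{-1}$ in all three relevant norms is bounded above and below by positive constants, so the cone norms $\|\cdot\|_{\mathcal{L}^{n/2}_2(\BB)}$, $\|\nabla_\BB\cdot\|_{\mathcal{L}^{n/2}_2(\BB)}$, and $\|\cdot\|_{\mathcal{L}^{n/(p+1)}_{p+1}(\BB)}$ reduce, up to harmless constants, to the ordinary $L^2$, $H^1$, and $L^{p+1}$ norms on a smooth compact manifold with boundary. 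The classical Sobolev embedding $H^1\hookrightarrow L^{p+1}$ on this compact piece, valid for $1<p+1<2n/(n-2)$, handles the bulk term immediately.

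Next, for the conical piece $\omega u$, I would perform the substitution $x_1 = e^{-t}$ with $t\in[0,\infty)$ and set $\tilde u(t,x') := (\omega u)(e^{-t},x')$. Under this substitution $dx_1/x_1 = -dt$ and $x_1\p_{x_1} = -\p_t$, so a direct computation shows that the cone $\mathcal{L}^{n/2}_2$-norm and cone gradient norm of $\omega u$ on $\BB$ equal, respectively, the classical $L^2$ and $H^1$ norms of $\tilde u$ on the cylinder $[0,\infty)\times X$, and the target norm $\|\omega u\|_{\mathcal{L}^{n/(p+1)}_{p+1}(\BB)}$ equals the ordinary $L^{p+1}$ norm of $\tilde u$ on the same cylinder. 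Since $[0,\infty)\times X$ is an $n$-dimensional Riemannian manifold with bounded geometry, the classical Sobolev embedding $H^1\hookrightarrow L^{p+1}$ holds in the admissible range and delivers the desired estimate for $\omega u$; the triangle inequality then combines the two pieces to conclude the proof.

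The main technical point will be choosing the cut-off $\omega$ compatibly with the definition of $\tilde{\mathcal{H}}^{1,n/2}_{2,0}(\BB)$ (which is constructed from closures of $C_0^\infty$ plus $\omega$-localized weighted Sobolev spaces on $X^\triangledown$), so that $\omega u$ and $(1-\omega)u$ inherit membership in the corresponding spaces on the cylinder and on the compact part and the change-of-variables identities above are literally equalities rather than mere inequalities. Once this bookkeeping is settled, the logarithmic substitution is the standard device that converts cone Sobolev spaces of weight $n/2$ into ordinary Sobolev spaces, and the embedding reduces to its classical counterpart.
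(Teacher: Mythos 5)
The paper does not prove this lemma at all: it simply quotes it from H. Chen and G.~Liu, \emph{J. Pseudo-Diff. Oper. Appl.} 3 (2012), Proposition~2.2 (equivalently, Lemma~2.4 of Di--Shang). So there is no in-paper argument to compare against; what you have written is essentially the standard proof that sits behind that citation. Your reduction is the right one: with the weights $\gamma=\fr{n}{2}$ and $\gamma=\fr{n}{p+1}$ all three norms are taken with respect to the single dilation-invariant measure $\fr{dx_1}{x_1}dx'$, so the substitution $x_1=e^{-t}$ turns the collar piece into the half-cylinder $[0,\infty)\times X$ with its product metric and converts the cone norms into the ordinary (semi)norms there \emph{exactly}, while on the support of $1-\omega$ the weight is bounded above and below and the claim is the compact-manifold Sobolev embedding. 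The commutator term $u\,\nabla_{\BB}\omega$ is supported where the weight is harmless, so the localization costs only a constant. This is a complete and correct plan for the range actually used in the paper.

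One point you should not gloss over: you assert that ``the classical Sobolev embedding $H^1\hookrightarrow L^{p+1}$ holds in the admissible range'' on the cylinder, but the lemma is stated for all $1<p+1<\fr{2n}{n-2}$, and for $1<p+1<2$ this step fails. The cylinder $[0,\infty)\times X$ (equivalently, $\BB$ with the measure $\fr{dx_1}{x_1}dx'$) has infinite volume, and on an infinite-volume manifold of bounded geometry one only has $H^1\hookrightarrow L^q$ for $2\le q\le 2^*$; for $q<2$ the embedding is false (take $\tilde u(t,x')=(1+t)^{-\alpha}$ with $\fr12<\alpha\le\fr{1}{q}$). So your argument proves the lemma for $2\le p+1<\fr{2n}{n-2}$, which covers every use in the paper since the standing assumption is $p+1>2$, but it does not — and cannot — deliver the statement as literally quoted for $p+1<2$. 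You should either restrict the range in your statement or note explicitly that the subcase $p+1<2$ is vacuous for the problem at hand.
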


\section{Proof of Theorem \ref{yiyou1}}

We begin this section with following lemma.

\begin{lemma}\label{l1}
Let $u=u(x,t)$ be the weak solution of problem \eqref{physic}, then
\begin{equation*}
u\in C^1\left(0, T; \mathcal{L}^{n}_1(\BB)\right),
\end{equation*}
and \eqref{cw1} holds for all $t\in(0, T)$.
\end{lemma}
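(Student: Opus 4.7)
The plan is to prove \eqref{cw1} by testing the weak formulation \eqref{rj} against admissible functions that approximate the constant $1$ (which is itself not in $\tilde{\mathcal{H}}^{1,\fr{n}{2}}_{2,0}(\BB)$), and then to upgrade the resulting almost-everywhere identity to one holding for every $t \in (0,T)$ by deriving enough time-continuity of $u_t$ from the elliptic invertibility of $I - \Delta_\BB$.

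First I would insert $\varphi_\epsilon(x) := x_1^\epsilon$, $\epsilon > 0$, into \eqref{rj}. Direct computation shows $\varphi_\epsilon \in \tilde{\mathcal{H}}^{1,\fr{n}{2}}_{2,0}(\BB)$ with $\nE \varphi_\epsilon = (\epsilon x_1^\epsilon, 0, \dots, 0)$, $\|\varphi_\epsilon\|_{\ml^{\fr{n}{2}}_2(\BB)}^2 = |X|/(2\epsilon)$ and $\|\nE\varphi_\epsilon\|_{\ml^{\fr{n}{2}}_2(\BB)}^2 = \epsilon|X|/2$. Then \eqref{rj} reads
$$\iE u_t\, x_1^\epsilon\,\da + \bigl(\nE u_t, \nE\varphi_\epsilon\bigr) + \bigl(\nE u, \nE\varphi_\epsilon\bigr) = \iE \Bigl(|u|^{p-1}u - \fr{1}{|\BB|}\iE|u|^{p-1}u\,\da\Bigr) x_1^\epsilon\,\da.$$
Cauchy--Schwarz on the two gradient pairings gives $O(\sqrt{\epsilon})$, so they vanish as $\epsilon\to 0^+$. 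For the remaining two terms, the pointwise bound $0 \le x_1^\epsilon \le 1$ on $\BB$ together with the chain of embeddings $\tilde{\mathcal{H}}^{1,\fr{n}{2}}_{2,0}(\BB)\hookrightarrow\ml^{\fr{n}{p+1}}_{p+1}(\BB)\hookrightarrow\ml^{n}_1(\BB)$ (cone Sobolev embedding followed by Hölder against the finite measure of $\BB$) permits dominated convergence, producing $\iE u_t\,\da = \iE|u|^{p-1}u\,\da - \bar{u}\cdot|\BB| = 0$ for a.e. $t\in(0,T)$, which is \eqref{cw1} in an almost-everywhere sense.

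To promote this to every $t \in (0,T)$ and to get the $C^1$ conclusion, I would view the equation as
$$(I-\Delta_\BB)u_t = \Delta_\BB u + |u|^{p-1}u - \fr{1}{|\BB|}\iE|u|^{p-1}u\,\da \quad \text{in } \bigl(\tilde{\mathcal{H}}^{1,\fr{n}{2}}_{2,0}(\BB)\bigr)^{*}.$$
The continuity $u \in C([0,T);\tilde{\mathcal{H}}^{1,\fr{n}{2}}_{2,0}(\BB))$ from \eqref{zjsk} combined with the cone Sobolev embedding makes the right-hand side continuous in $t$ with dual-space values, and the bounded isomorphism $(I-\Delta_\BB)^{-1}$ (Lax--Milgram with the Neumann boundary structure) then yields $u_t \in C([0,T);\tilde{\mathcal{H}}^{1,\fr{n}{2}}_{2,0}(\BB))\hookrightarrow C([0,T);\ml^{n}_1(\BB))$. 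Hence $u\in C^1((0,T);\ml^{n}_1(\BB))$ and the conservation \eqref{cw1} holds pointwise.

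The main obstacle lies in the first step: the gradient pairings with $\varphi_\epsilon$ are easy because $\|\nE\varphi_\epsilon\|_{\ml^{\fr{n}{2}}_2}\sim\sqrt\epsilon$, but the direct pairing $(u_t,\varphi_\epsilon)$ cannot be attacked by the same Cauchy--Schwarz estimate since $\|\varphi_\epsilon\|_{\ml^{\fr{n}{2}}_2}\sim \epsilon^{-1/2}$ blows up. One is therefore forced into a dominated-convergence argument that presupposes $u_t(t)\in\ml^{n}_1(\BB)$ for a.e. $t$, which is exactly why the $\ml^n_1$-embedding of the cone Sobolev space (via the finite cone measure of $\BB$) must be secured first; once that is in hand, the argument closes cleanly.
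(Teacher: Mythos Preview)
Your approach is correct in outline but differs substantially from the paper's. The paper does not approximate the spatial constant at all; instead it inserts a purely temporal cutoff $\chi_\delta(t)$ (a trapezoid supported on $[t_0-\delta,t_1+\delta]$) as test function, integrates the weak identity over $[0,T]$, and integrates by parts in $t$ to move the derivative off $u_t$ onto $\chi_\delta$. Since $\chi_\delta$ is constant in $x$, the gradient pairings vanish identically and the nonlocal source term cancels against its own average; what remains is $\int_0^T\chi_\delta'(t)\int_\BB u\,\da\,dt=0$, and the continuity \eqref{zjsk} of $u$ (not $u_t$) allows $\delta\to0$ to give $\int_\BB u(t_1)\,\da=\int_\BB u(t_0)\,\da$ directly. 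No information about $u_t$ beyond Definition~\ref{def} is invoked.

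Your route---spatial approximation $\varphi_\epsilon=x_1^\epsilon$ plus dominated convergence, followed by inversion of $I-\Delta_\BB$ via Lax--Milgram---is more careful about the point you correctly flag (the constant $1$ is not in the test space; the paper uses it without comment). More interestingly, your second step already delivers $u_t\in C\bigl([0,T);\tilde{\mathcal{H}}^{1,\fr{n}{2}}_{2,0}(\BB)\bigr)$, which is the full content of Theorem~\ref{yiyou1}, not merely this lemma; the paper instead reaches Theorem~\ref{yiyou1} incrementally, first proving the present lemma and then running a separate difference-quotient argument on $\|u(t)\|^2_{\mathcal{L}^{\fr{n}{2}}_2}$ and $\|\nabla_\BB u(t)\|^2_{\mathcal{L}^{\fr{n}{2}}_2}$. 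So your argument is in effect a shortcut to the main regularity theorem, at the cost of relying on the elliptic isomorphism on the cone space and the continuity of the Nemytskii map $u\mapsto|u|^{p-1}u$ into the dual---both reasonable, but heavier machinery than the paper uses for the lemma itself.
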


\begin{proof}
We prove our conclusion with choosing suitable test function. Let $t_0\in(0, T), t_1\in(t_0, T)$ and $\delta\in(0, \delta_0)$ with $\delta_0=\min\{t_0,\ T-t_1\}$. Setting
\begin{eqnarray*}
\chi_\delta=
\left\{
\begin{array}{ll}
0,\quad & t\in(-\infty, t_0-\delta)\cup (t_1+\delta, +\infty),\\
\frac{t-t_0+\delta}{\delta},\quad & t\in[t_0-\delta, t_0],\\
1,\quad & t\in(t_0, t_1),\\
\frac{t_1-t+\delta}{\delta},\quad & t\in[t_1, t_1+\delta],
\end{array}
\right.
\end{eqnarray*}
then by a simple calculation we can see
\begin{eqnarray*}
\chi'_\delta=
\left\{
\begin{array}{ll}
0,\ \ & t\in(-\infty, t_0-\delta)\cup (t_1+\delta, +\infty),\\
\frac{1}{\delta},\ \ & t\in[t_0-\delta, t_0],\\
0,\ \ & t\in(t_0, t_1),\\
-\frac{1}{\delta},\ \ & t\in[t_1, t_1+\delta].
\end{array}
\right.
\end{eqnarray*}
Taking $\chi_\delta$ as a test function in \eqref{rj} and integrating from $0$ to $T$ we see
\begin{equation*}\begin{split}
&\int_0^T\left(\iE u_t\chi_\delta\da+\iE\nE u_t\nE\chi_\delta\da+\iE\nE u\nE\chi_\delta\da\right)dt\\
&=\int_0^T\left(\iE|u|^{p-1}u\chi_\delta\da-\frac{1}{\BB}\iE|u|^{p-1}u\da\iE\chi_\delta\da\right)dt.
\end{split}\end{equation*}
Combining the fact $\chi_\delta(0)=\chi_\delta(T)=\nE\chi'_\delta(t)=\nE\chi_\delta(t)=0$, and using integration by parts with respect to $t$ we arrive at
\begin{equation*}\begin{split}
\int_0^T\iE u\chi'_\delta \da dt=0,
\end{split}\end{equation*} 
i.e.,
\begin{equation*}\label{hah23}
-\frac{1}{\delta}\int_{t_0-\delta}^{t_0}\iE u\da dt+\frac{1}{\delta}\int_{t_1+\delta}^{t_1}\iE u\da dt=0.
\end{equation*}
It follows from \eqref{zjsk} that the left side of above equality converges $\iE u(t_1)\da-\iE u(t_0)\da$ as $\delta\rightarrow 0$, then
\begin{equation*}\begin{split}
\iE \left(u(t_1)-u(t_0)\right)\da=0.
\end{split}\end{equation*}
Upon division by $t_1-t_0$ and taking limits $t_1\rightarrow t_0$, we infer from the arbitrariness of $t_0\in(0, T)$ that $\iE u(t)\da\in C^1(0, T)$ with $\frac{d}{dt}\iE u(t)\frac{dx_1}{x_1}dx'=0$.

\end{proof}

Now, we prove our main theorem.

\begin{proof}[Proof of Theorem \ref{yiyou1}]
Let $u=u(x, t)$ be the weak solution of problem \eqref{physic}. We first claim that
\begin{equation}\label{l2}
u\in C^1\left(0, T; \mathcal{L}^{\fr{n}{2}}_2(\BB)\right).
\end{equation}
Indeed, for all $t, s\in(0, T), s\not=t$, we can see
\begin{equation*}\begin{split}
\left|\frac{\|u(t)\|_{\mathcal{L}^{\fr{n}{2}}_2(\BB)}^2-\|u(s)\|_{\mathcal{L}^{\fr{n}{2}}_2(\BB)}^2}{t-s}\right|&=\left|\frac{1}{t-s}\iE(u(t)-u(s))(u(t)+u(s))\da\right|\\
&\leq\sup_{t\in(0, T)}\sup_{x\in\BB}|u(t)+u(s)|\iE\frac{u(t)-u(s)}{t-s}\da.
\end{split}\end{equation*}
Let $s\rightarrow t$ in above inequality, then \eqref{zjsk2} and Lemma \ref{l1} leads to \eqref{l2} with
\begin{equation}\label{scs}
\frac{d}{dt}\|u(t)\|_{\mathcal{L}^{\fr{n}{2}}_2(\BB)}^2\leq0.
\end{equation}

Then we can use similar way to get
\begin{equation}\label{kns}
u\in C^1\left(0, T; \tilde{\mathcal{H}}^{1,\fr{n}{2}}_{2,0}(\BB)\right).
\end{equation}
In fact, for all $t, s\in(0, T), s\not=t$ again, using integration by parts and the H\"{o}lder inequality in cone Sobolev spaces we can obtain
\begin{equation*}\begin{split}
&\left|\frac{\|\na_{\BB}u(t)\|_{\mathcal{L}^{\fr{n}{2}}_2(\BB)}^2-\|\na_{\BB}u(s)\|_{\mathcal{L}^{\fr{n}{2}}_2(\BB)}^2}{t-s}+2\iE u_t(t)\Delta_\mathbb{B}u(t)\da\right|\\
&=\left|\frac{1}{t-s}\iE\nE(u(t)-u(s))\nE(u(t)+u(s))\da+2\iE u_t(t)\Delta_\mathbb{B}u(t)\da\right|\\
&=\left|\iE\frac{u(t)-u(s)}{t-s}\left(-\Delta_\mathbb{B}\right)(u(t)+u(s))\da+2\iE u_t(t)\Delta_\mathbb{B}u(t)\da\right|\\
&=\left|\iE\left[\left(\frac{u(t)-u(s)}{t-s}-u_t(t)\right)\left(-\Delta_\mathbb{B}\right)(u(t)+u(s))+ u_t(t)\left(\Delta_\mathbb{B}u(t)-\Delta_\mathbb{B}u(s)\right)\right]\da\right|\\
&\leq\left\|\frac{u(t)-u(s)}{t-s}-u_t(t)\right\|_{\mathcal{L}^{\fr{n}{2}}_2(\BB)}\left\|\Delta_\mathbb{B}(u(t)+u(s))\right\|_{\mathcal{L}^{\fr{n}{2}}_2(\BB)}\\
&\quad+\left\|u_t(t)\right\|_{\mathcal{L}^{\fr{n}{2}}_2(\BB)}\left\|\Delta_\mathbb{B}u(t)-\Delta_\mathbb{B}u(s)\right\|_{\mathcal{L}^{\fr{n}{2}}_2(\BB)}.
\end{split}\end{equation*}
Since there holds \eqref{l2}, then $\left\|\frac{u(t)-u(s)}{t-s}-u_t(t)\right\|_{\mathcal{L}^{\fr{n}{2}}_2(\BB)}\rightarrow0\ \ \mbox{as}\ \ s\rightarrow t$. Hence, we can let $s\rightarrow t$ to get
\begin{equation}\label{slx}
\frac{d}{dt}\|\na_{\BB}u(t)\|_{\mathcal{L}^{\fr{n}{2}}_2(\BB)}^2=-2\iE u_t(t)\Delta_\mathbb{B}u(t)\da.
\end{equation}
This togethers with \eqref{l2} deduces to \eqref{kns}, then use the first equation in \eqref{physic} it is easy to get \eqref{cw3}. Furthermore, by the continuous embedding from $\tilde{\mathcal{H}}^{1,\fr{n}{2}}_{2,0}(\BB)\hookrightarrow \mathcal{L}^{\fr{n}{p+1}}_{p+1}(\BB)$ we see $u\in C^1\left((0, T); \tilde{\mathcal{H}}^{1,\fr{n}{2}}_{2,0}(\BB)\right)\hookrightarrow C^1\left((0, T); \mathcal{L}^{\fr{n}{p+1}}_{p+1}(\BB)\right)$.

Finally, we prove \eqref{cw2}. We aim to show
\begin{equation}\label{slx2}
\frac{d}{dt}\|u(t)\|_{\mathcal{L}^{\fr{n}{p+1}}_{p+1}(\BB)}^{p+1}=(p+1)\iE u_t\left(|u|^{p-1}u-\frac{1}{|\mathbb{B}|}\iE |u|^{p-1}u\frac{dx_1}{x_1}dx'\right)\da,
\end{equation}
because if it holds, then \eqref{slx} and the first equation in \eqref{physic} give \eqref{cw2} immediately.

By \eqref{cw1} we know
\begin{equation*}\begin{split}
(p+1)\iE u_t\left(|u|^{p-1}u-\frac{1}{|\mathbb{B}|}\iE |u|^{p-1}u\frac{dx_1}{x_1}dx'\right)\da&=(p+1)\iE u_t|u|^{p-1}u\frac{dx_1}{x_1}dx'\\
&=\frac{d}{dt}\|u(t)\|_{\mathcal{L}^{\fr{n}{p+1}}_{p+1}(\BB)}^{p+1},
\end{split}\end{equation*}
so we complete our proof.
\end{proof}

\section{Proof of Theorem \ref{djtk}}

We first claim that $N_-$ is an invariant set if $S(u_0)\leq0$.

\begin{lemma}\label{bubianj}
Let $u(t)$ be the weak solution of problem \eqref{physic} with $u_0\in S^-, S(u_0)\leq0$, then for all $t\in[t_0, T),\ u(t)\in N_-$.
\end{lemma}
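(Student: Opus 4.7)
The plan is to prove the stronger statement that $-I(u(t))\ge -I(u(t_0))\,e^{(p-1)(t-t_0)}>0$ on $[t_0,T)$, which immediately implies $u(t)\in N_-$ throughout and so proves the lemma. I would reach this by deriving the differential inequality $\frac{d}{dt}[-I(u(t))]\ge(p-1)[-I(u(t))]$ and closing with Gr\"onwall.

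My first step is the key identity
\[
\frac{d}{dt}\bigl[-I(u(t))\bigr]=(p+1)\|u_t\|^2_{\tilde{\mathcal{H}}^{1,\fr{n}{2}}_{2,0}(\BB)}+\fr{p-1}{2}\frac{d}{dt}\|\nE u\|^2_{\ml^{\fr{n}{2}}_2(\BB)},
\]
which I would obtain by testing the weak formulation \eqref{rj} with $\varphi=u_t$: the nonlocal source term vanishes thanks to the conservation \eqref{cw1} (via $\iE u_t\,\da=0$), leaving $\|u_t\|^2_{\tilde{\mathcal{H}}^{1,\fr{n}{2}}_{2,0}(\BB)}+\fr{1}{2}\frac{d}{dt}\|\nE u\|^2=\fr{1}{p+1}\frac{d}{dt}\|u\|^{p+1}_{\ml^{\fr{n}{p+1}}_{p+1}(\BB)}$. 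Substituting this back into $\frac{d}{dt}[-I(u)]=\frac{d}{dt}\|u\|^{p+1}_{\ml^{\fr{n}{p+1}}_{p+1}(\BB)}-\frac{d}{dt}\|\nE u\|^2_{\ml^{\fr{n}{2}}_2(\BB)}$ produces the displayed identity after a short algebraic rearrangement.

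Next I would lower-bound $\frac{d}{dt}\|\nE u\|^2$ in terms of $-I(u)$. Writing $\|\nE u\|^2_{\ml^{\fr{n}{2}}_2(\BB)}=\|u\|^2_{\tilde{\mathcal{H}}^{1,\fr{n}{2}}_{2,0}(\BB)}-\|u\|^2_{\ml^{\fr{n}{2}}_2(\BB)}$ and combining the energy equality \eqref{cw3} with the monotonicity \eqref{scs} (both established in the proof of Theorem \ref{yiyou1}), I obtain
\[
\frac{d}{dt}\|\nE u\|^2_{\ml^{\fr{n}{2}}_2(\BB)}\ge -2I(u(t))-\fr{2S(u_0)}{|\BB|}\|u(t)\|^p_{\ml^{\fr{n}{p}}_p(\BB)}\ge -2I(u(t)),
\]
where the hypothesis $S(u_0)\le 0$ enters in the last step. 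Plugging this back into the identity and discarding the non-negative term $(p+1)\|u_t\|^2$ yields $\frac{d}{dt}[-I(u(t))]\ge (p-1)[-I(u(t))]$; Gr\"onwall's inequality applied with the positive initial value $-I(u(t_0))>0$ (guaranteed by $u(t_0)\in N_-$) then finishes the argument.

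The technical heart of the proof is the initial identity: it genuinely requires the improved $C^1\bigl((0,T);\tilde{\mathcal{H}}^{1,\fr{n}{2}}_{2,0}(\BB)\bigr)$ regularity furnished by Theorem \ref{yiyou1} so that all time derivatives are meaningful, and it depends delicately on the cancellation of the nonlocal contribution via \eqref{cw1}; this is also exactly where the extra condition \eqref{kqd} in the original paper becomes unnecessary, since one never has to invoke the depth of the potential well at all. Everything after that is manipulation of two energy identities plus a standard Gr\"onwall step.
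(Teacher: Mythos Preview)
Your proposal is correct and follows essentially the same route as the paper: both arguments compute $\frac{d}{dt}I(u(t))$ using the three identities \eqref{cw2}, \eqref{cw3}, \eqref{scs}, arrive at the differential inequality $\frac{d}{dt}I(u(t))\le(p-1)I(u(t))$, and close with Gr\"onwall to obtain the exponential bound \eqref{5.2}. The only cosmetic difference is that the paper starts from the algebraic relation $I(u)=(p+1)J(u)-\tfrac{p-1}{2}\|\nE u\|^2_{\ml^{\fr{n}{2}}_2(\BB)}$ and invokes \eqref{cw2} directly, whereas you re-derive that same energy identity by testing with $u_t$; the subsequent splitting of $\|\nE u\|^2$ via $\|u\|^2_{\tilde{\mathcal{H}}^{1,\fr{n}{2}}_{2,0}(\BB)}-\|u\|^2_{\ml^{\fr{n}{2}}_2(\BB)}$ and the use of $S(u_0)\le0$ are identical in both.
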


\begin{proof}
By the definitions of $J(u(t)), I(u(t))$, \eqref{cw2}, \eqref{cw3}, \eqref{scs} and \eqref{kns} we have that for all $t\in [t_0, T)$,
\begin{equation*}\begin{split}
\frac{d}{dt}I(u(t))&=\frac{d}{dt}\left((p+1)J(u(t))-\frac{p-1}{2}\|\na_{\BB}u(t)\|_{\mathcal{L}^{\fr{n}{2}}_2(\BB)}^2\right)\\
&=\frac{d}{dt}\left((p+1)J(u(t))-\frac{p-1}{2}\|u(t)\|_{\tilde{\mathcal{H}}^{1,\fr{n}{2}}_{2,0}(\BB)}^2+\frac{p-1}{2}\|u(t)\|_{\mathcal{L}^{\fr{n}{2}}_2(\BB)}^2\right)\\
&\leq-(p+1)\|u(t)\|_{\tilde{\mathcal{H}}^{1,\fr{n}{2}}_{2,0}(\BB)}^2+(p-1)I(u(t))+\frac{(p-1)S(u_0)}{|\BB|}\|u(t)\|^p_{\mathcal{L}^{\fr{n}{p}}_{p}(\BB)}\\
&\leq (p-1)I(u(t)).
\end{split}\end{equation*}
Therefore, it follows from the Gronwall inequality that
\begin{equation}\label{5.2}
I(u(t))\leq I(u(t_0))e^{(p-1)(t-t_0)},\ \ \forall t\in(t_0, T).
\end{equation}
Since $I(u(t_0))<0$, then our claim is true.
\end{proof}

\begin{proof}[Proof of Theorem \ref{djtk}]
Let $u=u(x, t)$ be the weak solution of problem \eqref{physic} with $S(u_0)\leq0$, $T\in(0, +\infty]$ be the maximal existence time of $u$.

We first the sufficiency, i.e., we prove that $u$ blows up at finite time $T$ under $u_0\in S^-$. By Lemma \ref{bubianj} we know $I(u(t))<0$ for all $t\in[t_0, T)$, so we can infer from Proposition \ref{888}(i) that if $J(u(t_0))<d$, then take $t_0$ as initial time, $u$ blows up at finite time $T$. Therefore, to finish the proof of this theorem, we need only consider the case that
\begin{equation*}
d\leq J(u(t))\leq J(u(t_0)),\quad\forall t\in [t_0, T).
\end{equation*}
Arguing with contradiction, suppose that $u$ exists globally, i.e., $T=+\infty$. Above inequalities and \eqref{cw3} suggest that $J(u(t))$ is non-increasing and bounded on $[t_0, +\infty)$, so the limit $A:=\lim_{t\rightarrow\infty}J(u(t))$ exists and there holds
\begin{equation}\label{xuyao}
\int_{t_0}^\infty\|u_t(t)\|^2_{\tilde{\mathcal{H}}^{1,\fr{n}{2}}_{2,0}(\BB)}=J(u(t_0))-A,
\end{equation}
where $A=\frac{1}{2}A_1-\frac{1}{p+1}A_2$ and
\begin{equation}\label{a2}
A_1:=\lim_{t\rightarrow\infty}\|\na_{\BB}u(t)\|_{\mathcal{L}^{\fr{n}{2}}_2(\BB)}^2<+\infty,\ \ A_2:=\lim_{t\rightarrow\infty}\|u(t)\|^{p+1}_{\mathcal{L}^{\fr{n}{p+1}}_{p+1}(\BB)}<+\infty.
\end{equation}
Moreover, \eqref{xuyao} yields that $\int_{t_0}^\infty\|u_t(t)\|^2_{\tilde{\mathcal{H}}^{1,\fr{n}{2}}_{2,0}(\BB)}<+\infty$, which further deduces that there exists a diverging sequence $\{t_n\}$ such that
\begin{equation*}
\lim_{n\rightarrow\infty}\|u_t(t_n)\|^2_{\tilde{\mathcal{H}}^{1,\fr{n}{2}}_{2,0}(\BB)}=0.
\end{equation*}
By \eqref{qjxd} we know $u\in L^\infty ([t_0, \infty), \tilde{\mathcal{H}}^{1,\fr{n}{2}}_{2,0}(\BB))$. Thus, by \eqref{cw2} we can obtain that
\begin{equation*}\begin{split}
\left|I(u(t_n))\right|&=\left|(u_t(t_n), u(t_n))+(\nE u_t(t_n), \nE u(t_n))+\frac{S(u_0)}{|\BB|}\|u(t_n)\|^p_{\mathcal{L}^{\fr{n}{p}}_{p}(\BB)}\right|\\
&\leq\|u_t(t_n)\|_{\tilde{\mathcal{H}}^{1,\fr{n}{2}}_{2,0}(\BB)}\ \|u(t_n)\|_{\tilde{\mathcal{H}}^{1,\fr{n}{2}}_{2,0}(\BB)}+
\frac{|S(u_0)|}{|\BB|}\|u(t_n)\|^p_{\mathcal{L}^{\fr{n}{p}}_{p}(\BB)}\\
&\leq\|u_t(t_n)\|_{\tilde{\mathcal{H}}^{1,\fr{n}{2}}_{2,0}(\BB)}\ \|u(t_n)\|_{\tilde{\mathcal{H}}^{1,\fr{n}{2}}_{2,0}(\BB)}+
C(|\BB|)|S(u_0)|\ \|u(t_n)\|^{p+1}_{\mathcal{L}^{\fr{n}{p+1}}_{p+1}(\BB)},
\end{split}\end{equation*}
where we also used the H\"{o}lder inequality in cone Sobolev spaces and $C(|\BB|)$ is a positive constant with respect to $|\BB|$. Taking $n\rightarrow\infty$, we can see
\begin{equation*}
\left|I(u(t_n))\right|\leq C(|\BB|)|S(u_0)|A_2<+\infty,
\end{equation*}
here $A_2$ is the constant given by \eqref{a2}. However, by \eqref{5.2} we know as $n\rightarrow\infty$ there holds that
\begin{equation*}
|I(u(t_n))|\geq-I(u(t_0))e^{(p-1)t_n}\rightarrow+\infty.
\end{equation*}
So a contradiction occurs. Therefore, $T<+\infty$ and $u$ blows up in finite time.

The exponential growth of $u(t)$ for all $t\in[t_0, T)$ comes from \eqref{cw3} and \eqref{5.2}, that is,
\begin{equation*}\begin{split}
\frac{d}{dt}\|u(t)\|^2_{\tilde{\mathcal{H}}^{1,\fr{n}{2}}_{2,0}(\BB)}&=-2I(u(t))-\frac{2S(u_0)}{|\BB|}\|u(t)\|^p_{\mathcal{L}^{\fr{n}{p}}_{p}(\BB)}\\
&\geq -2I(u(t_0))e^{(p-1)(t-t_0)},
\end{split}\end{equation*}
integrating from $t_0$ to $t$, we arrive at the desired result.

Finally, we aim to obtain the necessity, i.e.,
\begin{equation*}
u(x, t)\ \mbox{blows up at finite time}\ T \Rightarrow u_0\in S^-.
\end{equation*}
In this end, we will prove the corresponding equivalent proposition that
\begin{equation}\label{djmt}
u_0\not\in S^-\ \Rightarrow\ T=+\infty.
\end{equation}
By $u_0\not\in S^-$ we know $I(u(t))\geq0$ for all $t\in[0, T)$. If $I(u(t))>0$ for all $t\in[0, T)$, then we can infer from \cite[Theorem 4.1 and 5.1]{dihuafei} that $T=+\infty$ under $J(u_0)\leq d$. While when $I(u(t))=0$ for all $t\in[0, T)$, then $I(u_0)=0$, this combines $u_0\in\tilde{\mathcal{H}}^{1,\fr{n}{2}}_{2,0}(\BB)\backslash\{0\}$ imply that $u_0\in N$, then $J(u_0)\geq d$. If $J(u_0)=d$, then \cite[Theorem 5.1]{dihuafei} tells us the weak solution exists globally. Hence, in order to claim \eqref{djmt} we need only prove that
\begin{equation*}
J(u_0)>d,\ I(u(t))\geq0\ \mbox{for all}\ t\in[0, T)\Rightarrow T=+\infty.
\end{equation*}
By the fact that $J(u(t))\leq J(u_0)$, it holds that
\begin{equation*}\begin{split}
\frac{p-1}{2(p+1)}\|u(t)\|^2_{\tilde{\mathcal{H}}^{1,\fr{n}{2}}_{2,0}(\BB)}&\leq\frac{p-1}{2(p+1)}\|u(t)\|^2_{\tilde{\mathcal{H}}^{1,\fr{n}{2}}_{2,0}(\BB)}
+\frac{1}{p+1}I(u(t))\\
&=J(u(t))\leq J(u_0).
\end{split}\end{equation*}
This implies that $\|u(t)\|^2_{\tilde{\mathcal{H}}^{1,\fr{n}{2}}_{2,0}(\BB)}$ is uniformly bounded on $[0, T)$, so $u$ exists globally.
\end{proof}

\section*{Acknowledgement}

The authors convey many thanks to the anonymous reviewers for their helpful suggestions and positive comments, which improve this paper and encourages the authors greatly.



\begin{thebibliography}{10}

\bibitem{3a}
A. Ambrosetti, P. Rabinowitz,
\newblock Dual variational methods in critical point theory and applications.
\newblock {\em J. Funct. Anal.}, 14: 349--381, 1973.

\bibitem{b}
G. Barenblat, I. Zheltov, I. Kochiva,
\newblock Basic concepts in the theory of seepage of homogeneous liquids in fissured rocks.
\newblock {\em J. Appl. Math. Mech.}, 24(5), 1286--1303, 1960.

\bibitem{bbb}
H. Brill,
\newblock A semilinear Sobolev evolution equation in a Banach space.
\newblock {\em J. Differential Equations}, 24(3), 412--425, 1977.

\bibitem{cao}
Y. Cao, J. Yin, C. Wang,
\newblock Cauchy problems of semilinear pseudo-parabolic equations.
\newblock {\em J. Differential Equations}, 246: 4568--4590, 2009.

\bibitem{Cazenave}
T. Cazenave, A. Haraux,
\newblock {\em An introduction to semilinear evolution equations},
\newblock Oxford Lecture Series in Mathematics and Its Applications, vol. 13 (Oxford: The Clarendon Press, 1998).

\bibitem{cc}
P. Chen, M. Gurtin,
\newblock On a theory of heat conduction involving two temperatures.
\newblock {\em Z. Angew. Math. Phys.}, 19, 614--627, 1968.

\bibitem{Chen2012Global}
H. Chen, G. Liu,
\newblock Global existence and nonexistence for semilinear parabolic equations
  with conical degeneration.
\newblock {\em J. Pseudo-Diff. Operat. Appl.}, 3(3): 329--349, 2012.

\bibitem{Chen2011Existence}
H. Chen, X. Liu, Y. Wei,
\newblock Existence theorem for a class of semilinear totally characteristic
  elliptic equations with critical cone sobolev exponents.
\newblock {\em Ann. Glob. Anal. Geom.}, 39(1): 27--43, 2011.

\bibitem{Chen2012Cone}
H. Chen, X. Liu, Y. Wei,
\newblock Cone sobolev inequality and dirichlet problem for nonlinear elliptic
  equations on a manifold with conical singularities.
\newblock {\em Calc. Var. Partial Differ. Equ.}, 43(3): 463--484, 2012.

\bibitem{Coriasco2007Realizations}
S. Coriasco, E. Schrohe, J. Seiler,
\newblock Realizations of differential operators on conic manifolds with
  boundary.
\newblock {\em Ann. Glob. Anal. Geom.}, 31(3): 223--285, 2007.

\bibitem{dihuafei}
H. Di, Y. Shang,
\newblock Global well-posedness for a nonlocal semilinear pseudo-parabolic equation with conical degeneration.
\newblock {\em J. Differential Equations}, 269(5): 4566--4597, 2020.

\bibitem{Dickstein}
F. Dickstein, N. Mizoguchi, P. Souplet, F. Weissler,
\newblock Transversality of stable and Nehari manifolds for a semilinear heat equation.
\newblock {\em Calc. Var. Partial Differ. Equ.} 42: 547--562, 2011.

\bibitem{Evans}
L. Evans,
\newblock {\em Partial differential equations, in: Graduate Studies in Mathematics},
\newblock vol. 19, AMS Providence, Phode Island, 1998.

\bibitem{Gazzola2005Finite}
F. Gazzola, T. Weth,
\newblock Finite time blow-up and global solutions for semilinear parabolic
  equations with initial data at high energy level.
\newblock {\em Diff. Integral Equa.}, 18(9): 961--990, 2005.

\bibitem{jism}
S. Ji, J. Yin, Y. Cao,
\newblock Instability of positive periodic solutions for semilinear pseudo-parabolic equations with logarithmic nonlinearity.
\newblock {\em J. Differential Equations}, 261: 5446--5464, 2016.

\bibitem{kkk}
E. Kaikina, P. Naumkin, I. Shishmarev,
\newblock The Cauchy problem for a Sobolev type equation with power like nonlinearity.
\newblock {\em Izv. Math.}, 69(1), 59--111, 2005.

\bibitem{kk}
G. Karch,
\newblock Asymptotic behaviour of solutions to some pesudoparabolic equations.
\newblock {\em Math. Methods Appl. Sci.}, 20(3), 271--289, 1997.

\bibitem{Levine1973Some}
H. Levine,
\newblock Some nonexistence and instability theorems for solutions of formally parabolic equations of the form $Pu_t=-Au+F(u)$.
\newblock {\em Arch. Ration. Mech. Anal.}, 51: 371--386, 1973.

\bibitem{Liu2003On}
Y. Liu,
\newblock On potential wells and vacuum isolating of solutions for semilinear
  wave equations.
\newblock {\em J. Differential Equations}, 192(1): 155--169, 2003.

\bibitem{p}
V. Padron,
\newblock Effect of aggregation on population recovery modeled by a forward-backward pseudoparabolic equation.
\newblock {\em Trans. Amer. Math. Soc.}, 356(7), 2739--2756, 2004.

\bibitem{Payne1975Saddle}
L. Payne, D. Sattinger,
\newblock Saddle points and instability of nonlinear hyperbolic equations.
\newblock {\em Isr. J. Math.}, 22(3-4): 273--303, 1975.

\bibitem{pp}
M. Ptashnyk,
\newblock Degenerate quasilinear pseudoparabolic equations with memory terms and variational inequalities.
\newblock {\em Nonlinear Anal.}, 66(12), 2653--2675, 2007.

\bibitem{Sattinger1968On}
D. Sattinger,
\newblock On global solution of nonlinear hyperbolic equations.
\newblock {\em Arch. Ration. Mech. Anal.}, 30(2): 148--172, 1968.

\bibitem{Schrohe2001Ellipticity}
E. Schrohe, J. Seiler,
\newblock Ellipticity and invertibility in the cone algebra on $L^p$-sobolev spaces.
\newblock {Integral Equ. Oper. Theory}, 2001, 41(1): 93--114.

\bibitem{Schulze1999Boundary}
B. Schulze,
\newblock {\em Boundary value problems and singular pseudo-differential
  operators}.
\newblock Wiley, Chichester. 1999.

\bibitem{Showalter1970Pseudoparabolic}
R. Showalter, T. Ting,
\newblock Pseudoparabolic partial differential equations.
\newblock {\em SIAM J. Math. Anal.}, 88(1):1--26, 1970.

\bibitem{Simon}
J. Simon,
\newblock Compact sets in the space $L^p(0, T; B)$,
\newblock {\em Ann. Mat. Pura Appl.}, 146(1): 65--96, 1986.

\bibitem{t}
T. Ting,
\newblock Certain non-steady flows of second-order fluids.
\newblock {\em Arch. Ration. Mech. Anal.}, 14, 1--26, 1963.

\bibitem{tt}
T. Ting,
\newblock Parabolic and pseudo-parabolic partial differential equations.
\newblock {\em J. Math. Soc. Japan}, 21, 440--453, 1969.

\bibitem{Tsutsumi1972On}
M. Tsutsumi,
\newblock On solutions of semilinear differential equations in a Hilbert space.
\newblock {\em Japonica}, 17: 173--193, 1972.

\bibitem{zsc}
X. Zhu, F. Li, Y. Li,
\newblock Some sharp results about the global existence and blowup of solutions to a class of pseudo-parabolic equations.
\newblock {\em Proc. Roy. Soc. Edinburgh Sect. A}, 147A, 1311--1331, 2017.

\end{thebibliography}

\end{document}